\newtheorem{theorem}{Theorem}[section]
\newtheorem{proposition}[theorem]{Proposition}
\newtheorem{corollary}[theorem]{Corollary}
\newtheorem{lemma}[theorem]{Lemma}
\theoremstyle{definition}
\theoremstyle{remark}
\newcommand{\R}{\mathbb{R}}
\newcommand{\Z}{\mathbb{Z}}
\begin{document}
\sloppy

\title[An Asymptotic Version of a Theorem of Knuth]
{An Asymptotic Version of a Theorem of Knuth}

\author[J. I. Novak]{Jonathan Novak}
\address{University of Waterloo, Faculty of Mathematics, Department of Combinatorics and Optimization, Waterloo, Ontario, Canada}
\email{j2novak@math.uwaterloo.ca}
\urladdr{www.math.uwaterloo.ca/~j2novak}
	

	\maketitle
	
	\section{Introduction}
	
	In this note we consider the asymptotics of the number $S(d,N)$ of permutations
	in the symmetric group $\mathfrak{S}(N)$ which have no decreasing subsequence
	of length $d+1,$ in the limit where $d\geq 2$ is a fixed but arbitrary positive integer
	and $N \rightarrow \infty.$  This is a fundamental problem in the subject of pattern
	avoidance in permutations, see \cite{AAM} and \cite[\S 7]{Stanley}.  In the interest of brevity, 
	familiarity with Young diagrams, Young tableaux, and 
	the Robinson-Schensted-Knuth (RSK) correspondence is assumed.  The reader is referred
	to Stanley's survey \cite{Stanley} for the necessary background and further references.  
	We adhere to 
	the notation and terminology of \cite{Stanley} save for the following exceptions:
	the $d \times q$ rectangular Young diagram is denoted $R(d,q)$ rather than
	$(q^d),$ and the number of standard Young tableaux of shape $\lambda$
	is denoted $\dim \lambda$ rather than $f^{\lambda}.$  Recall that the dimension
	of a Young diagram may be computed from Frobenius' fomula:
		\begin{equation}
			\label{Frobenius}
			\dim \lambda = \frac{\Gamma(\lambda_1+\dots+\lambda_d+1)}
			{\prod_{i=1}^d \Gamma(\lambda_i-i+d+1)} \prod_{1 \leq i<j \leq d}
			(\lambda_i-\lambda_j+j-i),
		\end{equation}
	where $d$ is any number such that $\lambda_{d+1}=0$ and $\Gamma(z)$ is the 
	gamma function.
	
	The following well-known exact formula for $S(2,N)$ is attributed to 
	Hammersley in \cite{Stanley}, with the first published proof due to Knuth 
	\cite[\S5.1.4]{Knuth}.
			
	\begin{theorem}[Knuth]
		Permutations with no decreasing subsequence of length $3$ are 
		counted by the Catalan numbers:
			$$S(2,N) = \dim R(2,N) = \frac{(2N)!}{N!(N+1)!}.$$
	\end{theorem}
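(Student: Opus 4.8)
The plan is to reduce the statement to a combinatorial identity by means of RSK, and then to settle that identity with a lattice-path bijection that makes the Catalan count transparent. I would split the two asserted equalities accordingly.

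First I would handle the reduction $S(2,N) = \sum_{k}(\dim(N-k,k))^2$. Applying the RSK correspondence, each $\pi \in \mathfrak{S}(N)$ is sent bijectively to a pair $(P,Q)$ of standard Young tableaux of a common shape $\lambda \vdash N$. By the decreasing-subsequence form of Schensted's theorem, the length of the longest decreasing subsequence of $\pi$ equals the number of rows of $\lambda$ (the length of its first column). Thus $\pi$ avoids a decreasing subsequence of length $3$ precisely when $\lambda$ has at most two rows, i.e. $\lambda = (N-k,k)$ with $0 \le k \le \lfloor N/2\rfloor$. Since the number of pairs of tableaux of shape $\lambda$ is $(\dim\lambda)^2$, bijectivity of RSK gives $S(2,N) = \sum_{k=0}^{\lfloor N/2\rfloor}\bigl(\dim(N-k,k)\bigr)^2$.

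Second, the rightmost equality is a direct evaluation of Frobenius' formula \eqref{Frobenius} with $d=2$ and $\lambda_1=\lambda_2=N$: the product over $i<j$ contributes the single factor $N-N+2-1=1$, and the denominator is $\Gamma(N+2)\Gamma(N+1)=(N+1)!\,N!$, so that $\dim R(2,N)=\dim(N,N)=\tfrac{(2N)!}{N!(N+1)!}$, the $N$th Catalan number $C_N$.

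The crux is therefore the identity $\sum_{k}\bigl(\dim(N-k,k)\bigr)^2 = \dim(N,N)$. I would encode each standard Young tableau of shape $(N-k,k)$ as a lattice path of $N$ unit steps $\pm1$, stepping up when $i$ lies in row $1$ and down when $i$ lies in row $2$; column-strictness of the tableau is exactly the requirement that every prefix have at least as many up-steps as down-steps, so the path stays nonnegative and ends at height $h=N-2k$. Hence such paths are counted by $\dim(N-k,k)$, and a pair $(P,Q)$ of equal shape becomes an ordered pair of nonnegative paths terminating at a common height $h$. Concatenating the path of $P$ with the step-reversed, sign-flipped path of $Q$ produces a nonnegative path of length $2N$ from $0$ back to $0$, that is, a Dyck path of semilength $N$; conversely, splitting an arbitrary such Dyck path at its midpoint (time $N$, where it sits at some height $h\ge 0$ with $h\equiv N \pmod 2$) recovers the pair. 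This is a bijection, so summing over $k$ (equivalently over the admissible midpoint heights $h$) yields $\sum_{k}(\dim(N-k,k))^2 = C_N = \dim(N,N)$, which combined with the previous two steps gives $S(2,N)=\dim R(2,N)=\tfrac{(2N)!}{N!(N+1)!}$.

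I expect the midpoint-splitting identity to be the only substantive step; the RSK reduction and the Frobenius evaluation are routine. An alternative to the bijection would be a purely algebraic argument using the ballot formula $\dim(N-k,k)=\binom{N}{k}-\binom{N}{k-1}$ together with a Vandermonde-type summation, but I find the path bijection cleaner because it exhibits the Catalan number directly and avoids telescoping binomial manipulations.
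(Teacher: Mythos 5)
Your proposal is correct: the RSK/Schensted reduction, the Frobenius evaluation of $\dim(N,N)$ (which matches \eqref{rectangleExact} with $d=2$, $q=N$), and the midpoint-splitting bijection on Dyck paths are all sound, and together they do prove the statement. Note, however, that the paper itself offers no proof of this theorem --- it is quoted as a classical result with the first published proof credited to Knuth --- so the fair comparison is with the paper's Section 2 machinery, whose $d=2$ specialization reproves Knuth's theorem: there the complementation identity \eqref{rectangleDecomp} expresses $\dim R(d,2n)$ as $\sum_{\mu}(\dim\mu)(\dim\mu^*)$, and for $d=2$ every two-row $\mu\vdash 2n$ satisfies $\mu=\mu^*$ and fits inside $R(2,2n)$, whence $E(2,n)=0$ and $S(2,2n)=\dim R(2,2n)$. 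Your Dyck-path argument is exactly the path-language form of that tableau complementation: splitting a path of semilength $N$ at time $N$ corresponds to splitting a standard tableau of $R(2,N)$ into its first $N$ entries (shape $\mu$) and the $180^\circ$-rotated remainder (shape $\mu^*$), and the automatic self-complementarity $\mu^*=\mu$ for two-row $\mu\vdash N$ is why your sum of squares, rather than a sum of mixed products, appears. What your version buys beyond the paper's: it is uniform in $N$ (the paper's proposition, as normalized, covers only $N=2n$ even, since its splitting is at $\mu\vdash dn$ inside $R(d,2n)$), it is fully bijective and self-contained at the level of lattice paths, and it makes the Catalan number visible without invoking dimension formulas until the final evaluation. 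What the paper's formulation buys is generality in $d$: the same complementation works for all $d$, with the failure of self-complementarity for $d>2$ quantified by the error term $E(d,n)$ that drives the rest of the article. One cosmetic caution: your phrase ``column-strictness'' should really be ``the column-increasing condition for standard tableaux,'' but the prefix inequality you state is the correct translation, so nothing breaks.
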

	
	For $d > 2,$ there is no known closed formula for $S(d,N).$  The main result
	of this note is the following asymptotic version of Knuth's theorem.
		
	\begin{theorem}[Asymptotic Knuth theorem]
		\label{thm:main}
		For $d$ fixed and $n \rightarrow \infty,$
			$$S(d,dn) \sim \dim R(d,2n).$$
	\end{theorem}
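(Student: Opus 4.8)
The plan is to combine the RSK correspondence, a complementation identity for standard tableaux of a rectangle, the Cauchy--Schwarz inequality, and a Laplace-type concentration estimate. By RSK, a permutation with no decreasing subsequence of length $d+1$ corresponds to a pair of standard Young tableaux of a common shape with at most $d$ rows, so that
\[
S(d,dn) = \sum_{\lambda \vdash dn,\ \ell(\lambda)\le d}(\dim\lambda)^2 .
\]
For the right-hand side I would slice a standard tableau of the rectangle $R(d,2n)$ at the value $dn$: the cells holding $1,\dots,dn$ form a straight shape $\mu\vdash dn$ with $\mu\subseteq R(d,2n)$, and the remaining cells form a standard skew tableau of shape $R(d,2n)/\mu$. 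Rotating the rectangle by $180^\circ$ (and complementing the entries) identifies the skew tableaux of $R(d,2n)/\mu$ with the standard tableaux of the straight shape $\mu^{*}=(2n-\mu_d,\dots,2n-\mu_1)\vdash dn$. This yields the identity
\[
\dim R(d,2n) = \sum_{\mu\vdash dn,\ \mu\subseteq R(d,2n)}\dim\mu\,\dim\mu^{*},
\]
in which $\mu\mapsto\mu^{*}$ is an involution on the partitions of $dn$ inside $R(d,2n)$.

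The easy half of the theorem is then immediate. Since $\mu\mapsto\mu^{*}$ is a bijection, $\sum_\mu(\dim\mu^{*})^2=\sum_\mu(\dim\mu)^2$, and Cauchy--Schwarz gives $\sum_\mu\dim\mu\,\dim\mu^{*}\le\sum_\mu(\dim\mu)^2$. Enlarging the index set to all $\lambda\vdash dn$ with at most $d$ rows, I obtain the exact inequality $\dim R(d,2n)\le S(d,dn)$, hence $\limsup_n \dim R(d,2n)/S(d,dn)\le 1$.

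For the reverse inequality I would localize both sums near the square $R(d,n)=(n^d)$. Writing $\mu_i=n+x_i$ with $x_1+\dots+x_d=0$, Frobenius' formula \eqref{Frobenius} together with Stirling's expansion of the gamma function shows that the factorial factor in $(\dim\mu)^2$, as a function of $x$, is maximized at $x=0$ (the linear term vanishes because $\sum_i x_i=0$) and decays like a Gaussian on the scale $|x_i|\sim\sqrt{n}$, the Vandermonde factor contributing only a polynomial envelope; consequently $1-o(1)$ of the total mass $S(d,dn)$ is carried by a set $G$ of near-square shapes, all of which satisfy $\mu\subseteq R(d,2n)$, and $S(d,dn)\sim\sum_{\mu\in G}(\dim\mu)^2$. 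The crucial point is that on $G$ one has $\dim\mu^{*}/\dim\mu=1+o(1)$ uniformly: the Vandermonde factor in \eqref{Frobenius} is exactly invariant under $\mu\mapsto\mu^{*}$ (reindex by $(i,j)\mapsto(d+1-j,\,d+1-i)$), while in the ratio of factorial products the shifts $a_i=-x_i+(i-1)$ and $b_i=x_i+(d-i)$ satisfy $\sum_i(a_i-b_i)=0$ and $a_i+b_i=d-1$, so both the $\log n$ term and the $O(1)$ term of the Stirling expansion cancel identically, leaving an error of order $n^{-1/2}$ times a polynomial in $x/\sqrt{n}$. Restricting the nonnegative sum to $G$ then gives
\[
\dim R(d,2n)\ \ge\ \sum_{\mu\in G}\dim\mu\,\dim\mu^{*}\ =\ (1+o(1))\sum_{\mu\in G}(\dim\mu)^2\ \sim\ S(d,dn),
\]
so $\liminf_n\dim R(d,2n)/S(d,dn)\ge 1$, and the theorem follows.

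The main obstacle is the quantitative step of the last paragraph: making the concentration of $(\dim\mu)^2$ near the square rigorous and uniform (so that the tail, including all $\lambda$ with $\lambda_1>2n$, is a vanishing fraction of $S(d,dn)$), and controlling the Stirling remainder in $\dim\mu^{*}/\dim\mu$ uniformly over the entire concentration region rather than merely pointwise. Both reduce to careful estimates of $\log\dim\lambda$ via \eqref{Frobenius}, and the bookkeeping for general fixed $d$ is the delicate part; the feature that keeps it tractable is that the symmetry $\mu\mapsto\mu^{*}$ forces the leading two orders of the relevant expansion to cancel.
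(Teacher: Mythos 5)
Your proposal is correct in outline and rests on the same two pillars as the paper---the RSK identity and the complementation identity $\dim R(d,2n)=\sum_{\mu}\dim\mu\,\dim\mu^*$---but its second half takes a genuinely different route. Your ``easy half'' is the paper's Proposition in disguise: completing the square gives $S(d,dn)=\dim R(d,2n)+E(d,n)$ with $E(d,n)\geq 0$, which is exactly your Cauchy--Schwarz inequality. For the hard half, the paper does not sandwich; it computes \emph{both} sides' asymptotics outright. The key Lemma \ref{lem:key} gives the local limit $C_{d,dn}\dim(n+y_1\sqrt{n},\dots,n+y_d\sqrt{n})\to e^{-W(y)}$, the scaled sums become Riemann sums converging to Mehta-type integrals over $\Omega_{d-1}$, and the symmetry \eqref{symmetry}, $W(y_1,\dots,y_d)=W(-y_d,\dots,-y_1)$, shows the two integrals coincide; this proves the stronger Theorem \ref{thm:generalized} for all $0\leq\alpha\leq\beta$, with Theorem \ref{thm:main} as the case $\beta=2$, $\alpha=1$. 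Your mechanism is the finite-$n$ avatar of that same symmetry: your Stirling cancellation is correct (writing the gamma arguments as $(n+1)+a_i$ and $(n+1)+b_i$ one indeed gets $\sum_i(a_i-b_i)=0$ and $a_i+b_i=d-1$, killing both the $\log n$ term and the quadratic $1/n$ term, and the Vandermonde is exactly $*$-invariant), and pointwise your claim $\dim\mu^*/\dim\mu\to 1$ is equivalent to $C_{d,dn}\dim\mu$ and $C_{d,dn}\dim\mu^*$ sharing the limit $e^{-W(y)}$. What your route buys: you never need to identify, or even establish existence of, the limiting integral, so the argument is softer and more elementary. What it costs: you obtain only the $\alpha=1$, $\beta=2$ equivalence and no constants, whereas the paper's computation simultaneously yields Regev's explicit asymptotics (Corollary \ref{cor:Regev}) and, in the conclusion, an evaluation of Mehta's integral at $\beta=2$. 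The quantitative gap you flag---uniform concentration of $(\dim\lambda)^2$ near the square and uniformity of the ratio over a growing window---is real but of precisely the same nature as what the paper itself defers: the convergence \eqref{Regev} is justified by dominated convergence with details delegated to \cite{Matsumoto, R, Sniady}, and those same uniform estimates close your gap (take a window $|y_i|\leq\omega_n$ with $\omega_n\to\infty$ and $\omega_n=o(n^{1/6})$, so that your ratio error $O(\omega_n^3 n^{-1/2})$ still vanishes while the window captures $1-o(1)$ of the mass). So your plan is sound and would work, but be aware that the concentration step requires essentially the same local estimates as Lemma \ref{lem:key} plus a tail bound, so the total analytic workload is comparable to the paper's.
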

	
	We will see below that $S(d,dn)>\dim R(d,2n)$ as soon as $d>2,$ so that
	Theorem \ref{thm:main} is false for $n$ finite.
	
	Via the RSK correspondence, an equivalent formulation of Theorem 
	\ref{thm:main} is the following.
	
	\begin{theorem}
		The number of permutations in $\mathfrak{S}(dn)$ with no decreasing
		subsequence of length $d+1$ is asymptotically equal, as 
		$n \rightarrow \infty,$ to the number of involutions in $\mathfrak{S}(2dn)$
		with longest decreasing subsequence of length exactly $d$ and 
		longest increasing subsequence of length exactly $2n.$
	\end{theorem}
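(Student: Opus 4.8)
The plan is to translate both quantities into sums of tableau dimensions via the RSK correspondence and Greene's theorem, and then to recognize that the involution count collapses to a single rectangular term, at which point Theorem \ref{thm:main} finishes the job. Recall that under RSK a permutation $w \in \mathfrak{S}(N)$ corresponds to a pair $(P,Q)$ of standard Young tableaux of a common shape $\lambda \vdash N$, and that by Greene's theorem the length of the longest increasing subsequence of $w$ equals $\lambda_1$ while the length of the longest decreasing subsequence equals the number of rows of $\lambda$. First I would observe that ``no decreasing subsequence of length $d+1$'' is exactly the condition that $\lambda$ have at most $d$ rows, so that
\[
S(d,dn) = \sum_{\substack{\lambda \vdash dn \\ \ell(\lambda) \le d}} (\dim \lambda)^2,
\]
which is precisely the quantity appearing on the left-hand side of the asserted asymptotic equality.

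For the right-hand side I would specialize RSK to involutions. Since $w^{-1}$ corresponds to $(Q,P)$, an involution corresponds to a pair with $P=Q$, and hence involutions in $\mathfrak{S}(2dn)$ are in bijection with standard Young tableaux of shapes $\mu \vdash 2dn$; under this bijection Greene's theorem gives longest increasing subsequence $=\mu_1$ and longest decreasing subsequence $=\ell(\mu)$. Thus the involutions counted on the right are exactly those whose associated shape $\mu$ satisfies $\mu_1 = 2n$ and $\ell(\mu) = d$, and their number is $\sum_{\mu} \dim \mu$ over all such $\mu \vdash 2dn$.

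The key step is to notice that these two constraints, together with the fixed cardinality, pin down a single shape. Indeed, if $\mu$ has exactly $d$ rows with $\mu_1 = 2n$ and $|\mu| = 2dn$, then $\mu_i \le \mu_1 = 2n$ for every $i$ forces $\sum_{i=1}^d \mu_i \le 2dn$, with equality precisely when every row has length $2n$; since $|\mu| = 2dn$ we conclude $\mu = R(d,2n)$. Consequently the right-hand count is \emph{exactly} $\dim R(d,2n)$, with no error term whatsoever.

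Finally I would invoke Theorem \ref{thm:main}, which asserts $S(d,dn) \sim \dim R(d,2n)$; combining this with the two identities above shows that the left-hand permutation count is asymptotic to the right-hand involution count, as claimed. I expect no serious obstacle in this argument, since the two RSK translations are standard and the rectangle-forcing observation is elementary, so all of the genuine difficulty is already absorbed into Theorem \ref{thm:main}. The one point requiring care is the insistence on the word ``exactly'' in both length constraints: replacing either ``exactly'' by ``at most'' would reintroduce non-rectangular shapes and destroy the clean identity, so both must be used precisely as stated.
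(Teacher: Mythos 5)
Your proof is correct and coincides with the paper's own argument (which the paper leaves implicit, presenting the theorem as an immediate RSK reformulation of Theorem \ref{thm:main}): it rests on exactly your two translations, together with the observation that $\mu \vdash 2dn$, $\ell(\mu)=d$, $\mu_1=2n$ force $\mu=R(d,2n)$, so the involution count is precisely $\dim R(d,2n)$. Only your closing caution is off: replacing either ``exactly'' by ``at most'' would change nothing, since $2dn=\sum_{i=1}^{\ell(\mu)}\mu_i \le \ell(\mu)\,\mu_1 \le d\cdot 2n = 2dn$ forces equality throughout and hence again $\mu=R(d,2n)$ --- the ``exactly'' hypotheses are automatic consequences of $|\mu|=2dn$, not load-bearing constraints.
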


	By Frobenius' formula, we have	
		\begin{equation}
			\label{rectangleExact}
			\dim R(d,q) = \frac{\Gamma(dq+1)}{\prod_{i=1}^{d} \frac{\Gamma(q+i)}
			{\Gamma(i)}}.
		\end{equation}
	Thus
		\begin{equation}
			\label{rectangleAsymptotic}
				\dim R(d,q) \sim (2\pi)^{\frac{1-d}{2}} \bigg{(} \prod_{i=1}^{d} \Gamma(i) 
				\bigg{)} d^{dq+\frac{1}{2}}q^{\frac{1-d^2}{2}}
		\end{equation}
	as $q \rightarrow \infty$ with $d$ fixed, by Stirling's formula.
	Setting $q=2N/d,$ Theorem \ref{thm:main} together with
	\eqref{rectangleAsymptotic} immediately implies the following.
	
	\begin{corollary}
		\label{cor:Regev}
		For $d$ fixed and $N \rightarrow \infty,$
			$$S(d,N) \sim (2\pi)^{\frac{1-d}{2}} \bigg{(} \prod_{i=1}^{d} \Gamma(i) 
				\bigg{)} d^{2N+\frac{d^2}{2}} (2N)^{\frac{1-d^2}{2}}.$$
	\end{corollary}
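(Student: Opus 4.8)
The plan is to read off Corollary \ref{cor:Regev} from Theorem \ref{thm:main} and the Stirling asymptotic \eqref{rectangleAsymptotic}, so that the substantive content is entirely in the two results I am allowed to assume, and the corollary itself is a matter of careful bookkeeping. First I would rephrase Theorem \ref{thm:main} in terms of $N$ rather than $n$: setting $N=dn$ we have $2n=2N/d$, so the theorem reads $S(d,N)\sim\dim R(d,2N/d)$. I would then feed $q=2N/d$ into \eqref{rectangleAsymptotic}. This is legitimate precisely because, as $N\to\infty$ through multiples of $d$, the argument $q=2N/d$ tends to infinity through the even integers, which is the regime in which \eqref{rectangleAsymptotic} is asserted to hold.

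The one genuine computation is then to collect the powers of $d$. The exponential factor of \eqref{rectangleAsymptotic} contributes $d^{dq+\frac12}=d^{2N+\frac12}$, while the polynomial factor contributes $q^{\frac{1-d^2}{2}}=(2N/d)^{\frac{1-d^2}{2}}=(2N)^{\frac{1-d^2}{2}}\,d^{\frac{d^2-1}{2}}$. Multiplying the two powers of $d$ gives $d^{2N+\frac12+\frac{d^2-1}{2}}=d^{2N+\frac{d^2}{2}}$, and the surviving factor $(2N)^{\frac{1-d^2}{2}}$, together with the constant $(2\pi)^{\frac{1-d}{2}}\prod_{i=1}^d\Gamma(i)$, reproduces the claimed formula verbatim. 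Since every manipulation replaces one quantity by another that is asymptotically equal to it, transitivity of the relation $\sim$ closes the argument.

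The point that deserves care, and the main obstacle to claiming the corollary for \emph{every} $N$ rather than only for $N\equiv 0\pmod d$, is that Theorem \ref{thm:main} is established only along the subsequence $N=dn$. If one is content to state Corollary \ref{cor:Regev} for $N$ a multiple of $d$, the derivation above is complete and immediate. To promote it to all $N\to\infty$ I would write $N=dn+r$ with $0\le r<d$ and sandwich $S(d,N)$ between values of the form $S(d,dn)$, exploiting that the per-step multiplicative growth of $S(d,\cdot)$ approaches $d^2$; one then checks that the bounded, $r$-dependent corrections are absorbed into the leading normalization $(2\pi)^{\frac{1-d}{2}}\prod_{i=1}^d\Gamma(i)$. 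I would flag, however, that this interpolation is an extra input not contained in Theorem \ref{thm:main} itself, so it is the step where the proof must genuinely do work beyond substitution.
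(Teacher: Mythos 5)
Your derivation coincides with the paper's: the paper proves the corollary exactly by setting $q=2N/d$ in \eqref{rectangleAsymptotic} and invoking Theorem \ref{thm:main}, with the same bookkeeping of powers of $d$ (namely $d^{2N+\frac{1}{2}}\cdot d^{\frac{d^2-1}{2}}=d^{2N+\frac{d^2}{2}}$). The divisibility caveat in your final paragraph is genuine but is passed over silently in the paper, which states the corollary for all $N\rightarrow\infty$ while deriving it only along $N=dn$; your proposed interpolation (which, as you correctly flag, needs input beyond Theorem \ref{thm:main}, e.g.\ monotonicity of $S(d,\cdot)$ plus control of the step-$d$ ratio) is extra work the paper does not do.
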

	
	Corollary \ref{cor:Regev} was first obtained by A. Regev \cite[Formula F.4.5.2]{R} in 1981 by 
	a rather different method, which will be discussed below.
		
	\section{Decomposition of rectangular tableaux}
	Given a Young diagram $\mu \subseteq R(d,q),$ let 
		\begin{equation}
			\mu^*=(q-\mu_d,\dots,q-\mu_1)
		\end{equation}
	denote the complement of $\mu$ relative to $R(d,q).$  Clearly,
		\begin{equation}
			\label{rectangleDecomp}
			\begin{split}
				\dim R(d,2n) &= \sum_{\substack{\mu \vdash dn\\ \mu \subseteq R(d,2n)}}
				(\dim \mu)(\dim \mu^*) \\
				&= \sum_{\substack{\mu \vdash dn\\ \mu \subseteq R(d,2n)\\
				\mu = \mu^*}}
				(\dim \mu)^2 + \sum_{\substack{\mu \vdash dn\\ \mu \subseteq R(d,2n)\\
				\mu \neq \mu^*}}
				(\dim \mu)(\dim \mu^*).
			\end{split}
		\end{equation}
	On the other hand, by RSK, we have
		\begin{equation}
			\label{RSKdecomp}
			\begin{split}
				S(d,dn) & =\sum_{\substack{\lambda \vdash dn\\ \ell(\lambda) \leq d}}
				(\dim \lambda)^2 \\
				&= \sum_{\substack{\mu \vdash dn\\ \mu \subseteq R(d,2n)\\
				\mu = \mu^*}}
				(\dim \mu)^2 + \sum_{\substack{\mu \vdash dn\\ \mu \subseteq R(d,2n)\\
				\mu \neq \mu^*}}
				(\dim \mu)^2 + \sum_{\substack{\nu \vdash dn\\ \nu_1>2n\\ \ell(\nu) \leq d}}
				(\dim \nu)^2.
			\end{split}
		\end{equation}
	Substituting for the first group of terms in \eqref{RSKdecomp} using \eqref{rectangleDecomp}
	and completing the square yields the following.
		
	\begin{proposition}
		$S(d,N)=\dim R(d,2n) + E(d,n),$ where the error term is given by
			$$E(d,n) = \frac{1}{2} \sum_{\substack{\mu \vdash dn\\ \mu \subseteq R(d,2n)}}
			(\dim \mu - \dim \mu^*)^2 +  \sum_{\substack{\nu \vdash dn\\ \nu_1>2n\\ \ell(\nu)\leq d}}
			(\dim \nu)^2.$$
	\end{proposition}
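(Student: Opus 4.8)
The plan is to eliminate the self-complementary sum $\sum_{\mu=\mu^*}(\dim\mu)^2$, which occurs with the same coefficient in both \eqref{rectangleDecomp} and \eqref{RSKdecomp}, and then reassemble the surviving terms into the two squares defining $E(d,n)$. First I would solve \eqref{rectangleDecomp} for this common quantity,
$$\sum_{\substack{\mu \vdash dn\\ \mu \subseteq R(d,2n)\\ \mu = \mu^*}} (\dim\mu)^2 = \dim R(d,2n) - \sum_{\substack{\mu \vdash dn\\ \mu \subseteq R(d,2n)\\ \mu \neq \mu^*}} (\dim\mu)(\dim\mu^*),$$
and substitute the right-hand side in place of the first group of terms in \eqref{RSKdecomp}. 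This extracts the main term $\dim R(d,2n)$ and leaves behind the residual sum $\sum_{\mu\neq\mu^*}\bigl[(\dim\mu)^2 - (\dim\mu)(\dim\mu^*)\bigr]$ together with the tail sum $\sum_{\nu_1>2n}(\dim\nu)^2$, the latter already being in the desired form.

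The key structural observation is that $\mu \mapsto \mu^*$ is a fixed-point-free involution of the index set $\{\mu \vdash dn : \mu \subseteq R(d,2n),\ \mu \neq \mu^*\}$: since $R(d,2n)$ has $2dn$ cells, the complement of a partition $\mu \vdash dn$ is again a partition of $dn$ contained in $R(d,2n)$, and $(\mu^*)^* = \mu$. Pairing each index $\mu$ with $\mu^*$ and invoking the identity $(\dim\mu)^2 - (\dim\mu)(\dim\mu^*) + (\dim\mu^*)^2 - (\dim\mu^*)(\dim\mu) = (\dim\mu - \dim\mu^*)^2$ completes the square. Because the residual sum is indexed by individual diagrams rather than unordered pairs, each pair $\{\mu,\mu^*\}$ is counted twice, yielding
$$\sum_{\substack{\mu \neq \mu^*}} \bigl[(\dim\mu)^2 - (\dim\mu)(\dim\mu^*)\bigr] = \frac{1}{2} \sum_{\substack{\mu \neq \mu^*}} (\dim\mu - \dim\mu^*)^2.$$
Finally, since $(\dim\mu - \dim\mu^*)^2 = 0$ whenever $\mu = \mu^*$, the self-complementary diagrams may be reinstated into the range of summation at no cost, producing the full sum over all $\mu \vdash dn$ with $\mu \subseteq R(d,2n)$ that appears in the statement.

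I do not expect a genuine obstacle, as the argument is a short algebraic rearrangement rather than a hard estimate. The single point requiring care is the bookkeeping of the factor $\tfrac{1}{2}$ in the completing-the-square step: one must confirm that summing over individual indices $\mu$ (not pairs) visits each pair exactly twice, so that halving the squared-difference sum reproduces the cross-term cancellation precisely. Verifying that the involution is fixed-point-free on the off-diagonal index set, and that self-complementary $\mu$ contribute nothing to the first error sum, then closes the argument.
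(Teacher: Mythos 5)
Your proposal is correct and follows essentially the same route as the paper, which obtains the identity by substituting the self-complementary sum from \eqref{rectangleDecomp} into \eqref{RSKdecomp} and completing the square; your pairing of $\mu$ with $\mu^*$ and the factor-$\tfrac{1}{2}$ bookkeeping simply spell out what the paper leaves implicit. The one point you rightly verify, that complementation is a well-defined fixed-point-free involution on the off-diagonal index set, is exactly what justifies the paper's terse ``completing the square'' step.
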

	
	Clearly $E(2,n)=0,$ in agreement with Knuth's theorem, while
	$E(d,n)>0$ for $d>2.$  Nevertheless, the error term is negligible in the limit
	$n \rightarrow \infty.$
	
	\section{Asymptotics of the dimension function}
	In order for $E(d,n)$ to be negligible, the sum $S(d,dn)$ must be dominated 
	in the $n \rightarrow \infty$ limit by self-complementary 
	diagrams contained in the rectangle $R(d,2n).$  
	The canonical self-complementary diagram relative to
	$R(d,2n)$ is the $d \times n$ rectangle $R(d,n).$  We consider
	the asymptotics of diagrams which deviate from $R(d,n)$ on the scale
	$\sqrt{n};$ this choice of scale emerges constructively in the proof of the 
	following key Lemma.
		
	\begin{lemma}
		\label{lem:key}
		For any distinct real numbers
			$$y_1> \dots > y_d$$
		satisfying
			$$y_1+\dots+y_d=0,$$
		we have
			$$\lim_{n \rightarrow \infty}C_{d,dn} \dim(n+y_1\sqrt{n},\dots,n+y_d\sqrt{n})
		= e^{-W(y_1,\dots,y_d)},$$
		where 
			$$C_{d,dn}=(2\pi)^{\frac{d}{2}} \frac{n^{dn+\frac{d(d+1)}{4}}}{\Gamma(dn+1)e^{dn}}$$
		and 
			$$W(y_1,\dots,y_d)=\frac{1}{2}\sum_{i=1}^d y_i^2 - \sum_{1 \leq i<j \leq d}
			\log(y_i-y_j).$$
	\end{lemma}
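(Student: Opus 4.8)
The plan is to reduce everything to Frobenius' formula \eqref{Frobenius} and then extract the asymptotics of the resulting gamma factors via Stirling's formula, being careful to track every divergent contribution. Writing $\lambda_i = n + y_i\sqrt n$, the hypothesis $y_1 + \dots + y_d = 0$ forces $\lambda_1 + \dots + \lambda_d = dn$, so the numerator of \eqref{Frobenius} is exactly $\Gamma(dn+1)$, which cancels against the factor $\Gamma(dn+1)^{-1}$ built into $C_{d,dn}$. This cancellation is the whole reason $C_{d,dn}$ is shaped the way it is, and it reduces the problem to understanding
\[
(2\pi)^{d/2}\,\frac{n^{dn + d(d+1)/4}}{e^{dn}}\cdot\frac{\prod_{1\le i<j\le d}\big((y_i - y_j)\sqrt n + j - i\big)}{\prod_{i=1}^d \Gamma\big(n + y_i\sqrt n - i + d + 1\big)}.
\]

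First I would dispose of the Vandermonde product in the numerator. Since $y_i > y_j$ for $i < j$, each factor is dominated by its leading term $(y_i - y_j)\sqrt n$, and there are $d(d-1)/2$ of them, so the product is asymptotic to $n^{d(d-1)/4}\prod_{1\le i<j\le d}(y_i - y_j)$.

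The substantive step is the Stirling expansion of the $d$ gamma factors in the denominator. Setting $z_i = n + y_i\sqrt n + (d + 1 - i)$, I would use $\log\Gamma(z) = (z - \tfrac12)\log z - z + \tfrac12\log(2\pi) + O(1/z)$; since each $z_i \to \infty$ and there are only finitely many, the error terms sum to $o(1)$. The delicate part is expanding $\log z_i = \log n + \log\big(1 + y_i n^{-1/2} + (d+1-i)n^{-1}\big)$ to order $n^{-3/2}$, because this logarithm gets multiplied by $z_i \sim n$: terms of order $n^{-3/2}$ then contribute $O(n^{-1/2})$ and vanish, whereas terms of order $n^{-1}$ survive into the limit. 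Carrying this out and summing over $i$ produces, besides genuinely divergent pieces, the finite contribution $\tfrac12\sum_{i=1}^d y_i^2$ together with the constant $\tfrac{d(d+1)}{2}$ and a term $\big(\sum_{i=1}^d y_i\big)\sqrt n$ which vanishes by hypothesis.

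Finally I would assemble everything on a logarithmic scale and verify that every divergent term cancels. The $\log n$ coefficients coming from $C_{d,dn}$, from the Vandermonde power $n^{d(d-1)/4}$, and from the Stirling expansion combine to zero; so do the linear $n$ terms (against $e^{-dn}$) and, crucially, the $\sqrt n$ terms (against $\sum_i y_i = 0$). What survives is exactly $\sum_{1\le i<j\le d}\log(y_i - y_j) - \tfrac12\sum_{i=1}^d y_i^2 = -W(y_1,\dots,y_d)$, and exponentiating gives the claim. I expect the main obstacle to be purely bookkeeping: organizing the expansion so that the leading $n\log n$, $n$, $\sqrt n$, and $\log n$ contributions are each shown to cancel. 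This is also exactly what pins down $\sqrt n$ as the correct fluctuation scale and $C_{d,dn}$ as the correct normalization, as anticipated in the discussion preceding the lemma.
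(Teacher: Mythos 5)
Your proposal is correct and follows essentially the same route as the paper: Frobenius' formula, Stirling's expansion of the gamma factors with the logarithm expanded to just the order needed, and verification that the divergent $n\log n$, $n$, $\sqrt{n}$, and $\log n$ contributions cancel against $C_{d,dn}$ and the Vandermonde power, with the $\sqrt{n}$ terms killed by the hypothesis $\sum_i y_i = 0$. The only cosmetic differences are that the paper runs the computation at a general deviation scale $n^{\varepsilon}$ and specializes to $\varepsilon = \tfrac{1}{2}$ at the end (to exhibit how the scale emerges), and that it first factors $\Gamma(n + y_i\sqrt{n} + d - i + 1) \sim n^{d-i+1}\,\Gamma(n + y_i\sqrt{n})$ instead of carrying the integer shift inside the Stirling expansion as you do.
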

	
	\begin{proof}		
			Let $0 < \varepsilon<1,$ and consider deviations from $R(d,n)$ on the 
			scale $n^{\varepsilon}.$	
			By the Frobenius formula, we have
				$$\dim(n+y_1n^{\varepsilon},\dots,n+y_dn^{\varepsilon})=
				\frac{\Gamma(dn+1)}{\prod_{i=1}^d \Gamma(n+y_in^{\varepsilon}+d-i+1)}
					\prod_{1 \leq i<j \leq d} ((y_i-y_j)n^{\varepsilon}+j-i)$$
			for $n$ sufficiently large.
					
			Let us first analyze the asymptotics of the product
				$$\frac{1}{\prod_{i=1}^d \Gamma(n+y_in^{\varepsilon}+d-i+1)}.$$
			We begin by noting that $\Gamma(n+y_in^{\varepsilon}+d-i+1) \sim
			n^{d-i+1}\Gamma(n+y_in^{\varepsilon}),$ so that
				$$\frac{1}{\prod_{i=1}^d \Gamma(n+y_in^{\varepsilon}+d-i+1)}
				\sim \frac{1}{n^{\frac{d(d+1)}{2}}\prod_{i=1}^d \Gamma(n+y_in^{\varepsilon})}.$$
			Taking logarithms yields
				$$\log \bigg{(} \frac{1}{\prod_{i=1}^d \Gamma(n+y_in^{\varepsilon}+d-i+1)} 
				\bigg{)}
				\sim -\frac{d(d+1)}{2}\log n - \sum_{i=1}^d \log \Gamma(n+y_in^{\varepsilon}).$$
			Recall Stirling's formula:
				$$\log \Gamma(N) \sim \frac{1}{2}\log 2\pi + (N-\frac{1}{2})\log N -N$$
			for $N$ large.  Thus
				$$\sum_{i=1}^d \log \Gamma(n+y_in^{\varepsilon}) \sim
				\frac{d}{2}\log 2\pi - dn + \sum_{i=1}^d (n+y_in^{\varepsilon}-\frac{1}{2})
					\log(n+y_in^{\varepsilon}).$$
			Now since 
				$$\log(n+y_in^{\varepsilon})=\log n + \log(1+y_in^{\varepsilon-1}),$$
			we have
				$$\sum_{i=1}^d (n+y_in^{\varepsilon}-\frac{1}{2})\log(n+y_in^{\varepsilon})
				=dn \log n - \frac{d}{2} \log n + \sum_{i=1}^d (n+y_in^{\varepsilon}-\frac{1}{2})
				\log(1+y_in^{\varepsilon-1}).$$
			Using the expansion
				$$\log(1+y_in^{\varepsilon-1})=y_in^{\varepsilon-1}
				-\frac{1}{2}y_i^2n^{2\varepsilon-2}+O(n^{3\varepsilon-3})$$
			for $n$ sufficiently large, we find that
				$$\sum_{i=1}^d (n+y_in^{\varepsilon}-\frac{1}{2})
				\log(1+y_in^{\varepsilon-1}) \sim 
				\frac{n^{2\varepsilon-1}}{2}\sum_{i=1}^d y_i^2.$$
			Putting this all together, we find that
				$$\frac{1}{\prod_{i=1}^d \Gamma(n+y_in^{\varepsilon}+d-i+1)} \sim
				\frac{e^{dn}}{(2\pi)^{\frac{d}{2}}n^{dn+\frac{d^2}{2}}} 
				e^{-\frac{n^{2\varepsilon-1}}{2}\sum_{i=1}^d y_i^2}$$
			as $n \rightarrow \infty.$
			
			The second group of factors is much easier to handle:
				$$\prod_{1 \leq i<j \leq d} ((y_i-y_j)n^{\varepsilon}+j-i) \sim
				n^{\varepsilon\frac{d(d-1)}{2}} \prod_{1 \leq i< j \leq d} (y_i-y_j).$$
				
			Thus when $\varepsilon=\frac{1}{2}$ we have
				$$\dim (n+y_1\sqrt{n},\dots,n+y_d\sqrt{n}) \sim
				\frac{\Gamma(dn+1)e^{dn}}{(2\pi)^{\frac{d}{2}}n^{dn+\frac{d(d+1)}{4}}}
				e^{-W(y_1,\dots,y_d)}$$
			as $n \rightarrow \infty,$ as claimed.	
				
		\end{proof}
		
	\section{Riemann sum}
	Consider the generalized sum
		\begin{equation}
			S(d,dn;\beta)= \sum_{\substack{\lambda \vdash dn\\ \ell(\lambda) \leq d}}
			(\dim \lambda)^{\beta}, \quad \beta>0,
		\end{equation}
	and its presentation as a sum in parameters around the rectangle $R(d,n):$
		\begin{equation}
			S(d,dn;\beta) = \sum_{\substack{\frac{(d-1)n}{\sqrt{n}} \geq y_1 \geq
			\dots \geq y_{d-1} \geq \frac{-n}{\sqrt{n}}\\ y_i \in \frac{1}{\sqrt{n}}\Z}}
			\dim(n+y_1\sqrt{n},\dots,n+y_d\sqrt{n})^{\beta},
		\end{equation}
	where $y_d:=-(y_1+\dots+y_{d-1}).$ 
	The lattice $(\frac{1}{\sqrt{n}}\Z)^{d-1}$ partitions $\R^{d-1}$ into cells
		\begin{equation}
			\bigg{[} \frac{k_1}{\sqrt{n}}, \frac{k_1+1}{\sqrt{n}} \bigg{)} \times \dots \times
			\bigg{[} \frac{k_{d-1}}{\sqrt{n}}, \frac{k_{d-1}+1}{\sqrt{n}} \bigg{)}, \quad
			k_i \in \Z,
		\end{equation}
	of volume $(\frac{1}{\sqrt{n}})^{d-1}.$  Scaling 
	by the mesh volume makes this into a Riemann sum, and by Lemma \ref{lem:key} we have
		\begin{equation}
			\label{Regev}
			\lim_{n \rightarrow \infty} \bigg{(} \frac{1}{\sqrt{n}} \bigg{)}^{d-1} C_{d,dn}^{\beta}
			S(d,dn;\beta) = \int_{\Omega_{d-1}} e^{-\beta W(y_1,\dots,y_d)} dy,
		\end{equation}
	where $C_{d,dn}$ and $W$ are as in Lemma \ref{lem:key} and $\Omega_{d-1} \subset
	\R^{d-1}$ is the region
		\begin{equation}
			\Omega_{d-1} =\{(y_1,\dots,y_{d-1}) \in \R^{d-1} : y_1 > \dots > y_d:=-(y_1+\dots+
			y_{d-1})\}.
		\end{equation}
	The details of this convergence can be checked and made rigorous using the 
	dominated convergence theorem; we refer the reader to \cite{Matsumoto, R, Sniady}
	for the full argument.
	
	Consider now the sum
		\begin{equation}
			\begin{split}
				&\sum_{\substack{\mu \vdash dn\\ \mu \subseteq R(d,2n)}}
				(\dim \mu)^{\alpha}(\dim \mu^*)^{\beta-\alpha} \\ &= 
				\sum_{\substack{\frac{n}{\sqrt{n}} \geq y_1 \geq
				\dots \geq y_{d-1} \geq \frac{-n}{\sqrt{n}}\\ y_i \in \frac{1}{\sqrt{n}}\Z}}
				\dim(n+y_1\sqrt{n},\dots,n+y_d\sqrt{n})^{\alpha}
				\dim(n-y_d\sqrt{n},\dots,n-y_1\sqrt{n})^{\beta-\alpha},
			\end{split}
		\end{equation}
	where $0 \leq \alpha \leq \beta$ and as before we denote
	$y_d:=-(y_1+\dots+y_{d-1}).$  This can again be viewed as a Riemann sum,
	and by exactly the same argument we have
		\begin{equation}
			\begin{split}
			&\lim_{n \rightarrow \infty} \bigg{(} \frac{1}{\sqrt{n}} \bigg{)}^{d-1} C_{d,dn}^{\beta}
			\sum_{\substack{\mu \vdash dn\\ \mu \subseteq R(d,2n)}}
				(\dim \mu)^{\alpha}(\dim \mu^*)^{\beta-\alpha}\\ &=
				 \int_{\Omega_{d-1}} e^{-\alpha W(y_1,\dots,y_d)} e^{-(\beta-\alpha)
				 W(-y_d,\dots,-y_1)}dy.
			\end{split}
		\end{equation}
	
	\section{Symmetry}
	Note that the function $W$ has the symmetry
		\begin{equation}
			\label{symmetry}
			W(y_1,\dots,y_d)=W(-y_d,\dots,-y_1).
		\end{equation}
	It follows that 
		\begin{equation}
			 \int_{\Omega_{d-1}} e^{-\beta W(y_1,\dots,y_d)} dy=
			  \int_{\Omega_{d-1}} e^{-\alpha W(y_1,\dots,y_d)} e^{-(\beta-\alpha)
				 W(-y_d,\dots,-y_1)}dy,
		\end{equation}
	and thus we have proved the following.

	\begin{theorem}
		\label{thm:generalized}
		For any $\beta>0$ and $0 \leq \alpha \leq \beta,$ we have
			$$S(d,dn;\beta) \sim \sum_{\substack{\mu \vdash dn\\ \mu \subseteq R(d,2n)}}
			(\dim \mu)^{\alpha}(\dim \mu^*)^{\beta-\alpha}$$
		as $n \rightarrow \infty.$
	\end{theorem}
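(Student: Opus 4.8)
The plan is to read the result off directly from the two Riemann-sum limits already in hand, using the symmetry of $W$ to identify their common value. After rescaling by the universal factor $(1/\sqrt{n})^{d-1}C_{d,dn}^{\beta}$, equation \eqref{Regev} gives
$$\bigg(\frac{1}{\sqrt{n}}\bigg)^{d-1}C_{d,dn}^{\beta}\,S(d,dn;\beta) \longrightarrow \int_{\Omega_{d-1}} e^{-\beta W(y_1,\dots,y_d)}\,dy,$$
while the parallel computation gives, for the right-hand sum,
$$\bigg(\frac{1}{\sqrt{n}}\bigg)^{d-1}C_{d,dn}^{\beta}\sum_{\substack{\mu \vdash dn\\ \mu \subseteq R(d,2n)}}(\dim\mu)^{\alpha}(\dim\mu^*)^{\beta-\alpha} \longrightarrow \int_{\Omega_{d-1}} e^{-\alpha W(y_1,\dots,y_d)}e^{-(\beta-\alpha)W(-y_d,\dots,-y_1)}\,dy.$$
By the symmetry relation \eqref{symmetry}, the second integrand equals $e^{-\beta W(y_1,\dots,y_d)}$ at every point of $\Omega_{d-1}$, so the two limiting integrals coincide.

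Before dividing I would confirm that this common value $I:=\int_{\Omega_{d-1}} e^{-\beta W}\,dy$ is a finite positive constant, since otherwise the asymptotic equivalence would be vacuous. Unwinding the definition of $W$ shows $e^{-\beta W}=\exp(-\tfrac{\beta}{2}\sum_{i}y_i^2)\prod_{1\leq i<j\leq d}(y_i-y_j)^{\beta}$, which is strictly positive throughout $\Omega_{d-1}$ (each difference $y_i-y_j$ being positive there) and integrable because the Gaussian weight dominates the polynomial Vandermonde factor; this is a convergent Selberg-type integral. Hence $0<I<\infty$.

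The conclusion is then immediate: the common rescaling factor $(1/\sqrt{n})^{d-1}C_{d,dn}^{\beta}$ cancels in the quotient, and since numerator and denominator both tend to the same nonzero limit $I$, the ratio of $S(d,dn;\beta)$ to $\sum_{\mu}(\dim\mu)^{\alpha}(\dim\mu^*)^{\beta-\alpha}$ tends to $I/I=1$, which is exactly the claimed equivalence. The only genuine subtlety is the finiteness and nonvanishing of $I$; the deeper analytic input, namely the dominated-convergence justification of the Riemann-sum limits themselves, has already been granted in the preceding section, so the argument here is otherwise formal.
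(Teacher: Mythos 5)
Your argument is exactly the paper's proof: both rescaled sums converge, via the Riemann-sum limits of the preceding section, to the same integral by the symmetry $W(y_1,\dots,y_d)=W(-y_d,\dots,-y_1)$, and the equivalence follows. Your additional verification that the common limit $I$ is finite and strictly positive is a worthwhile point of rigor that the paper leaves implicit (deferring to the cited references), but it does not change the route.
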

	
	Theorem \ref{thm:main} is the special case $\beta=2,\alpha=1$ of this 
	more general asymptotic equivalence.
	
	\section{Conclusion}
	The multidimensional integral 
		\begin{equation}
			\Psi(d;\beta)= \int_{\R^d} e^{-\frac{\beta}{2}\sum_{i=1}^d x_i^2}
			\prod_{1 \leq i<j \leq d} |x_i-x_j|^{\beta} dx, \quad \beta>0,
		\end{equation}
	is known as \emph{Mehta's integral}.  It is the partition function
	of a Coulomb gas of $d$ identical point charges $x_1>\dots>x_d$ on the real line
	at inverse temperature $\beta,$ with energy functional
		\begin{equation}
			W(x_1,\dots,x_d) = \frac{1}{2}\sum_{i=1}^d x_i^2 - \sum_{1 \leq i<j \leq d}
			\log(x_i-x_j).
		\end{equation}
	Dyson and Mehta \cite{DM} studied this integral and conjectured the formula
		\begin{equation}
			\label{DysonMehta}
				\Psi(d;\beta)=(2\pi)^{\frac{d}{2}} \beta^{-\frac{d}{2}-\beta\frac{d(d-1)}{4}}
				\prod_{i=1}^d \frac{\Gamma(1+i\frac{\beta}{2})}
				{\Gamma(1+\frac{\beta}{2})},
		\end{equation}
	which they verified for $\beta \in \{1,2,4\}$ using properties of Hermite polynomials
	(see e.g. \cite[\S3.5.1]{LZ} for the $\beta=2$ case of this argument).  Later, Bombieri
	observed that, for general $\beta,$ \eqref{DysonMehta} can be deduced 
	from the Selberg integral formula.  See \cite{FW} for the interesting history of this problem.
	
	Regev \cite[Lemma 4.3]{R} showed that 
		\begin{equation}
			\label{RegevLemma}
			\int_{\Omega_{d-1}} e^{-\beta W(y_1,\dots,y_d)}dy = \frac{1}{\Gamma(d+1)}
			\sqrt{\frac{\beta}{2\pi d}}\Psi(d;\beta),
		\end{equation}
	and used this fact together with equation \eqref{Regev} above to determine 
	the asymptotics of $S(d,N;\beta)$ from the known form \eqref{DysonMehta} of
	$\Psi(d;\beta).$  In this article, we have evaluated the asymptotics of 
	$S(d,N)=S(d,N;2)$ directly, without appealing to the exact value of 
	$\Psi(d;2).$  Thus, we may obtain 
		\begin{equation}
			\Psi(d;2) = (2\pi)^{\frac{d}{2}} 2^{-\frac{d^2}{2}}\prod_{i=1}^{d+1}\Gamma(i)
		\end{equation}
	by substituting the asymptotic form of $S(d,N)$ (Corollary \ref{cor:Regev})
	in \eqref{Regev} and using \eqref{RegevLemma}.  It would be interesting to know
	if the asymptotics of $S(d,N;\beta)$ can be determined directly in a similar way 
	for general $\beta >0.$  If so, this would yield a new and elementary verification of 
	\eqref{DysonMehta}.  In particular, for this purpose one may assume that $\beta$ is 
	an even integer, see \cite{DM}.  
			
	Finally, in this note we have only considered the asymptotics of $S(d,N)$ in the 
	single scaling limit where $N \rightarrow \infty$ with $d$ fixed.  Baik, Deift, and Johansson
	\cite{BDJ} have shown that 
		\begin{equation}
			S(d,N) \sim F(t) N!
		\end{equation}
	in the double scaling limit where $d,N \rightarrow \infty$ at the rate 
	$d \sim 2\sqrt{N} + tN^{1/6},$ with $t \in \R$ fixed.  
	Here $F(t)$ is the Tracy-Widom distribution function, 
	see \cite{BDJ}.  Since
		\begin{equation}
			S(d,dn)=\dim R(d,2n)+E(d,n),
		\end{equation}
	if the error term $E(d,n)$ can be effectively estimated in the double scaling limit 
	then concrete estimates for $F(t)$ will follow.
		
	\section{Acknowledgements}
	I would like to thank Sho Matsumoto and Andrei Okounkov for helpful correspondence,
	and Michael Albert for pointing out an error in an early version of this paper.

 \end{document}